\documentclass{amsart}
\usepackage{graphicx}
\usepackage{graphicx}
\usepackage{color}
\usepackage{times}
\usepackage{cite}
\usepackage{enumerate,latexsym}
\usepackage{latexsym}
\usepackage{amsmath,amssymb}
\usepackage{graphicx}
\usepackage{amsthm}
\usepackage{verbatim}
\usepackage{hyperref}
\vfuzz2pt 
\hfuzz2pt 
\newtheorem{thm}{Theorem}[section]
\newtheorem{cor}[thm]{Corollary}

\newtheorem{lem}[thm]{Lemma}
\newtheorem{prop}[thm]{Proposition}
\theoremstyle{definition}

\theoremstyle{remark}

\numberwithin{equation}{section}

\newcommand{\set}[1]{\left\{#1\right\}}
\newcommand{\Real}{\mathbb R}
\newcommand{\eps}{\varepsilon}

\newcommand{\func}[1]{\ensuremath{\mathop{\mathrm{#1}}} }

\newcommand{\Div}[0]{\func{div}}

\newcommand{\xX}[0]{\mathbf{x}}

\newcommand{\hH}[0]{\mathbf{H}}

\newcommand{\nN}[0]{\mathbf{n}}

\title[A Uniqueness Property of high multiplicity tangent flows in 
dimension three]{A Remark on a uniqueness property of high multiplicity tangent flows in 
dimension three}
\author{Jacob Bernstein}
\address{Department of Mathematics, Johns Hopkins University, 3400 N. Charles Street, Baltimore, MD 21218}
\email{bernstein@math.jhu.edu}
\author{Lu Wang}
\address{Department of Mathematics, Johns Hopkins University, 3400 N. Charles Street, Baltimore, MD 21218}
\email{lwang@math.jhu.edu}
\thanks{The first author was partially supported by the EPSRC Programme Grant entitled ``Singularities of Geometric Partial 
Differential Equations'' grant number EP/K00865X/1 and by the NSF Grant DMS-1307953.  The second author was partially supported by the 
AMS-Simons Travel Grant}

\begin{document}
\begin{abstract}
 In this note, we combine the work of Ilmanen \cite{I} and of Colding-Ilmanen-Minicozzi \cite{CIM} to observe a 
uniqueness property for tangent flows at the first singular time of a smooth mean curvature flow of a closed surface in $\Real^3$. 
Specifically, if, at a fixed singular point, one tangent flow is a positive integer multiple of a shrinking $\Real^2, \mathbb{S}^1\times 
\Real$ or $\mathbb{S}^2$, then, modulo rotations, all tangent flows at the point are the same.  
\end{abstract}
\maketitle
\section{Introduction}
The \emph{mean curvature flow} is the negative gradient flow for area and is given by
\begin{equation}\label{eqn:MCF}
 \frac{d\xX}{dt} = \hH 
\end{equation}
where $\hH=-H\nN$ is the mean curvature vector of the map $\xX(t,\cdot):M \to M_t\subset\Real^{n+1}$, $\nN$ the unit normal of $M_t$ and 
$H=\Div \nN$ the mean curvature.  For compact hypersurfaces, this flow always develops singularities in finite time.  Using Huisken's 
monotonicity formula for the mean curvature flow (see \cite{HuiskenM1}, \cite{HuiskenM2}), one can show that at a singular point of the flow, a sequence of parabolic rescalings 
subconverge, in a measure theoretic sense, to a solution of the mean curvature flow that moves by homothety -- see \cite{ISing}, \cite{I} and 
\cite{WSing}.  Such flows are called \emph{tangent flows} and, in principle, depend on the choice of rescaling sequence.

In this short note, we combine the work of Ilmanen \cite{I} and Colding-Ilmanen-Minicozzi \cite{CIM} in order to observe a 
novel uniqueness property for tangent flows of high multiplicity at the first singular time of a smooth mean curvature flow of a closed 
surface in $\Real^3$. Specifically, if, at a point, one tangent flow is a positive integer multiple of a shrinking $\Real^2, 
\mathbb{S}^1\times \Real$ or $\mathbb{S}^2$, then all tangent flows at the point agree modulo rotations.  That is,
 \begin{thm} \label{thm:tangentflow}
 Let $M_t$, $0\leq t<T$, be a smooth embedded mean curvature flow of a closed surface in $\Real^3$.  Suppose that $(\xX,T)$ is a 
singular point of this flow and that $m[\Sigma_t]$ is a tangent flow at $(\xX, T)$.  If, after rotating $\Real^3$, 
$\Sigma_{-1}=\Real^k \times \mathbb{S}^{2-k}$ for $0\leq k \leq 2$, then all tangent flows at $(\xX, T)$ are, up to a rotation of 
$\Real^3$, equal to $m[\Sigma_t]$. 
\end{thm}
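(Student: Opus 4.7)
My plan is to combine three inputs: Ilmanen's smoothness theorem for tangent flows of surfaces in $\Real^3$ at the first singular time, Huisken's monotonicity formula, and the uniqueness results of Colding--Ilmanen--Minicozzi for cylindrical tangent flows (together with related rigidity for compact and planar shrinkers).

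First, by Ilmanen's theorem, any tangent flow at $(\xX,T)$ has the form $m'[\Sigma'_t]$ with $\Sigma'_{-1}$ a smooth embedded self-shrinker in $\Real^3$, and $m'\in\N$. Second, by Huisken's monotonicity formula the Gaussian density $\Theta(\xX,T)$ is well-defined, so that for any second tangent flow $m'[\Sigma'_t]$ at $(\xX,T)$ the density identity
$$
m\,F(\Sigma_{-1})=m'\,F(\Sigma'_{-1})
$$
holds, where $F$ denotes Huisken's Gaussian area of the $t=-1$ slice. These two observations reduce the question to a comparison of smooth shrinkers of prescribed Gaussian density.

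The bulk of the argument is then case analysis in $k$. For $k=1$, the cylindrical case, I would appeal directly to the CIM uniqueness theorem for cylindrical tangent flows, whose Łojasiewicz--Simon inequality near $\mathbb{S}^1\times\Real$ pins down the axis of $\Sigma'_{-1}$; combined with the density identity this forces $\Sigma'_{-1}=\mathbb{S}^1\times\Real$ up to rotation and $m'=m$. For $k=0$, the spherical case, the compactness of the shrinker makes $(\xX,T)$ an isolated singular point, and Huisken's rigidity of compact convex self-shrinkers (alternatively, Schulze's uniqueness of compact tangent flows) forces $\Sigma'_{-1}=\mathbb{S}^2$ up to rotation, with $m'=m$ from the density identity. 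For $k=2$, the planar case, I would invoke the Bernstein--Wang classification of low-entropy shrinkers in $\Real^3$, according to which the only shrinkers with entropy at most $F(\mathbb{S}^1\times\Real)$ are (rotations of) $\Real^2$, $\mathbb{S}^2$, and $\mathbb{S}^1\times\Real$, with entropies $1$, $4/e$, and $\sqrt{2\pi/e}$. These three values are pairwise rationally independent, and so the identity $m=m'F(\Sigma'_{-1})$, with $m,m'\in\N$, forces $\Sigma'_{-1}=\Real^2$ and $m'=m$ in the low-entropy regime.

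The main obstacle is the planar case with large multiplicity $m$: a priori the integer density $m$ might be realized by a high-entropy shrinker $\Sigma'_{-1}$ scaled by a small multiplicity $m'<m$, and ruling this out is precisely where the combination of Ilmanen's smoothness (ensuring $\Sigma'_{-1}$ is actually a smooth embedded shrinker, not just an integral varifold) and the low-entropy classification of Bernstein--Wang becomes essential. Handling this case requires controlling the rescaled flow near a multiplicity-$m$ plane, essentially as an $m$-sheeted graphical problem, and invoking CIM-type rigidity for the shrinker equation on each sheet to force convergence back to a multiplicity-$m$ plane along every blow-up sequence.
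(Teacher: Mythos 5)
Your proposal has the right ingredients in places (Ilmanen's smoothness of tangent flows in $\Real^3$, Huisken's monotonicity, CIM rigidity), but the overall strategy --- pinning down every tangent flow from the single scalar constraint $m\,F(\Sigma_{-1})=m'\,F(\Sigma'_{-1})$ plus a classification of shrinkers --- cannot be completed, and you have in effect noticed this yourself in the planar case without resolving it. There is no classification of smooth embedded self-shrinkers in $\Real^3$ at the relevant density scale: as soon as $m\geq 2$ the Gaussian density exceeds the low-entropy threshold $F(\mathbb{S}^1\times\Real)$, so the low-entropy classification you invoke says nothing, and there exist embedded shrinkers (Angenent's torus, and many conjectured others) whose Gaussian areas have no known closed form, so no rational-independence argument can exclude $m'[\Sigma']$ with $\Sigma'$ exotic. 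The same defect afflicts your $k=0$ and $k=1$ cases: nothing in the density identity forces a second tangent flow at the same point to be compact or convex (so Huisken's theorem does not apply to it), and the Colding--Minicozzi \L ojasiewicz uniqueness of cylindrical tangent flows is a multiplicity-one theorem --- the paper cites it only for $m=1$ --- so it cannot be ``appealed to directly'' in the high-multiplicity setting that is the whole point of the statement. (You also conflate \cite{CIM}, a rigidity theorem for shrinkers near cylinders, with the uniqueness-of-blowups theorem \cite{CMUniq}.)

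The missing idea is dynamical rather than arithmetic. The paper works with the rescaled flow $N_s=(T-t)^{-1/2}(M_t-x_0)$ and exploits three facts: (i) $s\mapsto [N_s]$ is continuous in the Gaussian-weighted distance $d_V$; (ii) by Ilmanen's Theorem 2, for \emph{all} large $s$ (not just along a subsequence) $[N_s]$ is $d_V$-close to $m_s[\Sigma_s]$ for some smooth properly embedded shrinker $\Sigma_s$ with uniform genus and area bounds; and (iii) a gap theorem (Corollary \ref{cor:shrinkergap}): any $m'[\Sigma]$ with such bounds that is $d_V$-close to $m[\mathbb{S}^k\times\Real^{2-k}]$ must equal $m[O(\mathbb{S}^k\times\Real^{2-k})]$ for a rotation $O$, with $m'=m$. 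Item (iii) is where the smooth compactness theorem for shrinkers (multiplicity-one convergence, obtained from the genus bound, White's curvature estimates, and the instability of shrinkers on large balls) and the rigidity results of Brakke/White, Huisken, and \cite{CIM} actually enter. An open--closed continuation argument in $s$ then propagates closeness to $m[\mathbb{S}^k\times\Real^{2-k}]$ (up to a drifting rotation) from the given sequence $s_j$ to all large $s$, and this is what forces \emph{every} blow-up sequence to see the same limit up to rotation. Without some version of this continuity-plus-gap mechanism, the case analysis you outline does not close.
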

When the multiplicity is one, i.e., $m=1$,  Theorem \ref{thm:tangentflow} holds in all dimensions.  This follows  from the work of Brakke 
\cite{Brakke}, Huisken 
\cite{Huisken} and Colding-Ilmanen-Minicozzi \cite{CIM} -- for, respectively, $\Real^n$, $\mathbb{S}^n$ and $\mathbb{S}^k\times 
\Real^{n-k}$, $1\leq k \leq n-1$. 
In fact, in this case, more is now known -- i.e., that the tangent flows are unique. This is an immediate consequence of Brakke's 
regularity theorem \cite{Brakke} when the tangent flow is $\Real^n$ and is automatic for $\mathbb{S}^n$ by symmetry. It is a much harder 
problem when the tangent flow is cylindrical, and has been shown only very recently by Colding-Minicozzi 
\cite{CMUniq}. 

Theorem 
\ref{thm:tangentflow} is notable as little is known in general about tangent flows of higher multiplicity for the mean curvature flow (or 
even of tangent cones of higher multiplicity for minimal submanifolds). The well-known multiplicity 
one conjecture asserts that in 
the situation under consideration, all tangent flows are of multiplicity one -- see \cite[pg. 8]{I}.  In all dimensions, this is known to 
be true for mean convex mean curvature flows -- see White \cite{WhiteMC}, Sheng-Wang \cite{Sheng} 
and also the recent work of 
Andrews \cite{Andrews} and Haslhofer-Kleiner \cite{HK}.  However, even in dimension three, essentially nothing is known without the 
mean convexity assumption.

\section{Notation} 
On $\Real^{n+1}$, let us define the Gaussian weight to be
\[
 \Phi(\xX)=e^{-\frac{|\xX|^2}{4}}.
\]
For a hypersurface $\Sigma$ of $\Real^{n+1}$, 
 define $\mathcal{F}$, the $\Phi$ weighted area, to be
\begin{equation*}
 \mathcal{F}[\Sigma]=\int_{\Sigma} \Phi \; d \mathcal{H}^{n},
\end{equation*}
where $\mathcal{H}^n$ is the $n$-dimensional Hausdorff measure.
A hypersurface is said to be a \emph{self-shrinker} if it is a critical point for 
$\mathcal{F}$.
The Euler-Lagrange equation of $\mathcal{F}$ is
\begin{equation*}
 \mathbf{H}+\frac{\xX^\perp}{2}=0,
\end{equation*}
where $\xX^\perp$ is the normal component of the position vector $\xX$. 
A smooth solution, $\Sigma$, to this equation is called a \emph{self-shrinker} and has the property that, for $t<0$, the flow 
$\Sigma_t=\sqrt{-t} 
\Sigma$ is a solution to the mean curvature flow \eqref{eqn:MCF}.
For $0\leq k \leq n$, the hypersurfaces
\begin{equation*}
 \mathbb{S}^{n-k}\times\Real^{k}=\set{\sum_{i=k}^n x_{i+1}^2 = 2(n-k)}
\end{equation*}
are all self-shrinkers. 

Let
\begin{equation*}
 \mathcal{L}_\Sigma=\Delta_\Sigma -\frac{\xX}{2} \cdot \nabla_\Sigma 
\end{equation*}
be the natural drift Laplacian on hypersurfaces which arises from using the weight $\Phi$.
A straightforward computation gives that
\begin{equation*}
 L_\Sigma=\mathcal{L}_\Sigma+|A|^2 +\frac{1}{2}
\end{equation*}
is the linearization of the self-shrinker equation.

For a properly embedded hypersurface $\Sigma\subset \Real^{n+1}$, let $[\Sigma]=\mathcal{H}^n\llcorner \Sigma$ be the associated Radon 
measure.
Following \cite{CIM}, we introduce a Gaussian weighted distance on the space of Radon measures on $\Real^{n+1}$.  Namely, let 
$f_n$ be a countable dense subset of the unit ball in the space of continuous functions  
with compact support in $\Real^{n+1}$.  Given two Radon measures $\mu_1, \mu_2$ on $\Real^{n+1}$, set
\begin{equation}\label{eqn:RadonDist}
 d_V(\mu_1, \mu_2)=\sum_{k=1}^\infty 2^{-k}\left| \int f_k \Phi d\mu_1- \int f_k \Phi d\mu_2 \right|.
\end{equation}
It is straightforward to verify that $d_V$ is a metric on the space of Radon measures satisfying 
\begin{equation*}
 \mathcal{F}[\mu]:=\int \Phi d\mu<\infty
\end{equation*}
and that $\mu_i\to \mu$ in the weak* topology if and only if $d_V(\mu_i, \mu)\to 0$.

\section{A Rigidity Property of Smooth Shrinkers}
We first note a simple fact about the volume growth of self-shrinkers.
\begin{lem} \label{VolGrowLem}
  If $\Sigma^n\subset B_{2R} \subset
\Real^{n+1}$ is a smooth properly embedded self-shrinker and $2\sqrt{2n}\leq R_1<R_2\leq R$, then
\begin{equation*}
  \mathcal{H}^n(B_{R_2}\cap\Sigma ) \leq 2^{n+3} \frac{R_2^n}{R_1^n}  \mathcal{H}^n(B_{R_1}\cap\Sigma ).
 \end{equation*}
\end{lem}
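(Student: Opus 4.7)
The central tool is the fundamental divergence identity for self-shrinkers. Applying the divergence theorem on $\Sigma \cap B_r$ to the position vector field $\xX$---noting that $\Div_\Sigma \xX = n$, that the outward conormal on $\Sigma \cap \partial B_r$ is $\xX^T/|\xX^T|$, and using the shrinker equation $\xX \cdot \hH = -|\xX^\perp|^2/2$---yields
\[
  n V(r) \;=\; \tfrac{1}{2} \int_{\Sigma \cap B_r} |\xX^\perp|^2 \, d\mathcal{H}^n + \int_{\Sigma \cap \partial B_r} |\xX^T|\, d\mathcal{H}^{n-1}
\]
for a.e.\ $r \in (0, 2R)$, where $V(r) := \mathcal{H}^n(\Sigma \cap B_r)$ (the a.e.\ restriction is needed only to ensure $\partial B_r$ meets $\Sigma$ transversely, by Sard). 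Writing $P(r)$ and $T(r)$ for the two summands, this reads $n V(r) = P(r) + T(r)$, which immediately yields $T(r) \leq n V(r)$ and $\int_{\Sigma \cap B_r} |\xX^\perp|^2 \leq 2 n V(r)$.

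Next, using the coarea formula $V'(r) = r\int_{\Sigma \cap \partial B_r}|\xX^T|^{-1}d\mathcal{H}^{n-1}$ together with the relation $|\xX^\perp|^2 = r^2 - |\xX^T|^2$ on $\partial B_r$, one computes $P'(r) = \tfrac{r^2}{2}V'(r) - \tfrac{r}{2}T(r)$. Differentiating $n V = P + T$ in $r$ and rearranging gives the clean identity
\[
  V'(r) \;=\; \frac{T(r)}{r} + \frac{2\, P'(r)}{r^2}.
\]
Applying the bound $T(r) \leq n V(r)$ and integrating by parts the $P'/r^2$ term (so that the boundary contribution $P/r^2$ may be absorbed via $P \leq n V$), I would derive an integral inequality of the Gronwall type
\[
  V(R_2) \;\leq\; c_n\, V(R_1) \;+\; n \int_{R_1}^{R_2} \frac{V(r)}{r}\,dr, \qquad R_1 \leq R_2 \leq R,
\]
where $c_n$ is a dimensional constant. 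The hypothesis $R_1 \geq 2\sqrt{2n}$ ensures that the lower-order error terms (of relative size $n/R_1^2 \leq 1/8$) can be cleanly absorbed. A Gronwall argument then produces the desired polynomial bound $V(R_2) \leq 2^{n+3}(R_2/R_1)^n V(R_1)$, with $2^{n+3}$ emerging from tracking the constants through this absorption.

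The hardest step will be getting the Gronwall exponent to be exactly $n$ rather than some larger dimensional constant. A crude use of $P \leq n V$ produces a worse effective exponent (typically $\tfrac{4n+2}{3}$ or so), which is insufficient for the claim. Obtaining sharp exponent $n$ requires exploiting the cancellation between the $T(r)/r$ term and the integrated $P'(r)/r^2$ term carefully; essentially, the ``principal'' contribution $nV(r)/r$ coming from $T/r$ has to balance exactly the integrated boundary flux at scale $R_1$, while the remaining terms contribute only to the constant prefactor. Geometrically, this reflects that for self-shrinkers the mean curvature vector $\hH = -\xX^\perp/2$ points inward, preventing runaway volume growth---the content of the lemma is precisely that this inward force dominates beyond the critical radius $\sqrt{2n}$ where $\LL |\xX|^2 = 2n - |\xX|^2$ changes sign.
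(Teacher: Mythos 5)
Your strategy is sound but genuinely different from the paper's. The paper disposes of the lemma in one line: a self-shrinker generates the self-similar flow $\Sigma_t=\sqrt{-t}\,\Sigma$, so $\mathcal{H}^n(\Sigma_t\cap B_\rho)=(-t)^{n/2}\mathcal{H}^n(\Sigma\cap B_{\rho/\sqrt{-t}})$, and the localized area estimate for mean curvature flow \cite[Proposition 4.9]{EckerBook} transfers the bound between the scales $R_1$ and $R_2$, the hypothesis $R_1\ge 2\sqrt{2n}$ serving to absorb the $2n(t-t_0)$ correction in Ecker's cutoff. You instead run a self-contained first-variation argument on the shrinker itself; your identity $nV(r)=P(r)+T(r)$, the coarea formulas, and the relation $V'(r)=T(r)/r+2P'(r)/r^2$ are all correct. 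The trade-off is clear: the paper's route is short but imports MCF machinery, while yours is elementary and makes the role of the critical radius $\sqrt{2n}$ transparent.

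The one step you flag but do not carry out --- getting exponent $n$ rather than $(4n+2)/3$ --- is genuinely the crux, and your proposal as written only asserts that the cancellation can be arranged. Here is how to close it: do not integrate $V'$ directly, but compute
\[
\frac{d}{dr}\Bigl(\frac{V(r)}{r^n}\Bigr)=\frac{T(r)-nV(r)}{r^{n+1}}+\frac{2P'(r)}{r^{n+2}}\le \frac{2P'(r)}{r^{n+2}},
\]
using $T\le nV$ with no loss in the linear term. Integrating from $R_1$ to $R_2$, integrating the right side by parts, discarding $-2P(R_1)/R_1^{n+2}\le 0$, and using $P\le nV$ gives
\[
\frac{V(R_2)}{R_2^n}\Bigl(1-\frac{2n}{R_2^2}\Bigr)\le \frac{V(R_1)}{R_1^n}+2n(n+2)\int_{R_1}^{R_2}\frac{V(r)}{r^{n+3}}\,dr .
\]
Since $R_1^2\ge 8n$, the left-hand prefactor is at least $3/4$, and the integral is $\int W(r)\,r^{-3}\,dr$ with $W=V/r^n$, whose total weight $2n(n+2)\int_{R_1}^{\infty}r^{-3}\,dr\le (n+2)/8$ is bounded independently of $R_2/R_1$. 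Gronwall applied to $W$ then yields $W(R_2)\le \tfrac{4}{3}e^{(n+2)/6}\,W(R_1)\le 2^{n+3}W(R_1)$, which is the lemma with room to spare. The point is exactly the one you intuited: the sharp exponent comes from keeping the term $nV/r$ exact, while the $P'$ contribution only enters through the convergent weight $r^{-3}$, which is where $R_1\ge 2\sqrt{2n}$ is used. With this step supplied, your proof is complete and, unlike the naive version, does not degrade when $R_2/R_1$ is large.
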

\begin{proof}
 This follows immediately from standard volume estimates for mean curvature flow and the fact that $\Sigma$ moves by scaling -- see 
\cite[Proposition 4.9]{EckerBook}.
\end{proof}
\begin{cor} \label{BrakkeCor}
 There are constants $R=R(n)$ and $\delta=\delta(n)$ so that: If $\Sigma^n\subset\Real^{n+1}$ is a smooth properly embedded self-shrinker 
satisfying
\begin{equation*}
 \mathcal{H}^n(B_R\cap \Sigma)-\omega_n R^n \leq \delta
\end{equation*}
and
\begin{equation*}
  \int_{B_R\cap \Sigma} \Phi -\int_{B_R\cap \Real^n} \Phi \leq \delta,
 \end{equation*}
then $\Sigma$ is a hyperplane.
\end{cor}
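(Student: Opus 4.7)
The plan has three steps: (i) upgrade the two local hypotheses to a bound on the total Gaussian area $\mathcal{F}[\Sigma]$ close to $\mathcal{F}[\Real^n]$; (ii) apply Brakke's local regularity theorem to the associated shrinker flow $\Sigma_t = \sqrt{-t}\,\Sigma$ at the spacetime origin; and (iii) exploit the scaling invariance of this flow to propagate the resulting curvature estimate to all of $\Sigma$, concluding that $\Sigma$ is flat.

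For step (i), I would iterate Lemma \ref{VolGrowLem} to get Euclidean volume growth $\mathcal{H}^n(B_\rho \cap \Sigma) \leq C(n)\,\rho^n$ for every $\rho \geq R$, provided $R \geq 2\sqrt{2n}$. Combined with the exponential decay of $\Phi$, this bounds the tail $\int_{\Sigma \setminus B_R}\Phi\,d\mathcal{H}^n$ by some $\eta(R)$ with $\eta(R)\to 0$ as $R\to\infty$; the corresponding tail for $\Real^n$ is explicit and also small. The second hypothesis then gives
\[
\mathcal{F}[\Sigma] - \mathcal{F}[\Real^n] \leq \delta + \eta(R) + \eta_{\Real^n}(R),
\]
so by choosing $R$ large and $\delta$ small (both depending only on $n$) I can arrange $\mathcal{F}[\Sigma]/(4\pi)^{n/2} \leq 1 + \eps_0$, where $\eps_0 = \eps_0(n) > 0$ is the threshold from Brakke's regularity theorem.

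For step (ii), Huisken's monotonicity formula applied to the shrinker flow, together with the self-similarity $\Sigma_t = \sqrt{-t}\,\Sigma$ and the substitution $\xX = \sqrt{-t}\,y$, gives that the Gaussian density at $(\mathbf{0},0)$ equals exactly $\mathcal{F}[\Sigma]/(4\pi)^{n/2}$, which is now at most $1+\eps_0$. For every $t<0$ the slice $\Sigma_t$ is a smooth rescaling of $\Sigma$, and so the densities at all other spacetime points in a parabolic neighborhood of $(\mathbf{0},0)$ are at most $1$. Brakke's local regularity theorem then yields constants $r_0 = r_0(n) > 0$ and $C = C(n)$ such that $|A_{\Sigma_t}(\xX)| \leq C/r_0$ whenever $|\xX| < r_0/2$ and $-r_0^2/4 < t < 0$.

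For step (iii), the scaling identity $A_{\Sigma_t}(\sqrt{-t}\,y) = A_\Sigma(y)/\sqrt{-t}$ gives, for any $y \in \Sigma$ and any $-t>0$ with $\sqrt{-t}\,|y| < r_0/2$ and $-t < r_0^2/4$, the estimate
\[
|A_\Sigma(y)| \;\leq\; \sqrt{-t}\,\frac{C}{r_0}.
\]
Letting $t \to 0^-$ forces $A_\Sigma(y) = 0$ pointwise. Thus $\Sigma$ is totally geodesic, and being a smooth properly embedded self-shrinker, it must be a single hyperplane through the origin. The main obstacle I anticipate is in step (ii): verifying carefully that the density bound at $(\mathbf{0},0)$ (together with the trivial densities elsewhere) really does permit invoking Brakke's theorem in a parabolic neighborhood of definite size, with constants depending only on the dimension $n$.
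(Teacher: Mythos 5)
Your proposal is correct and its step (i) is precisely the paper's proof: the paper takes $\delta=\epsilon/3$, uses Lemma \ref{VolGrowLem} to choose $R=R(n)$ so that $\int_{\Sigma\setminus B_R}\Phi\le \epsilon/3$, and then invokes as a black box from \cite{WhiteLocal} the statement that a self-shrinker with Gaussian area below $1+\epsilon(n)$ times that of a plane must be a hyperplane. Your steps (ii)--(iii) simply re-derive that black box by the standard monotonicity-plus-scaling argument (with the one point you already flag, namely that one should bound the Gaussian density \emph{ratios} at all nearby spacetime points and scales, e.g.\ via the fact that a shrinker's entropy equals its Gaussian area), so the two approaches coincide in substance.
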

\begin{proof}
 Let $\epsilon=\epsilon(n)>0$ be the constant in Brakke's theorem, that is, $\epsilon$ is chosen so
 \begin{equation*}
  \int_{\Sigma}\Phi < 1+\epsilon(n)
 \end{equation*}
implies that $\Sigma$ is a hyperplane -- see \cite{WhiteLocal}.  Pick $\delta=\epsilon/3$ and use Lemma \ref{VolGrowLem} to choose 
$R=R(n)$ so
\begin{equation*}
 \int_{\Sigma\backslash B_{R}} \Phi \leq \epsilon/3.
\end{equation*}
\end{proof}
We also note that on sufficently large scales shrinkers are unstable -- this is implicit in the proof of \cite[Theorem 0.5]{CM} -- we 
include a proof for completeness.
\begin{lem}\label{InstabLem}
 If $R\geq 2^{n+5}$ and $\Sigma^n\subset B_{R} \subset
\Real^{n+1}$ is a smooth properly embedded self-shrinker, then $\lambda(L_\Sigma)<0$. Here $\lambda(L_\Sigma)$ is the first 
Dirichlet eigenvalue of $L_\Sigma$.  
\end{lem}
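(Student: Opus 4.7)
The plan is to exhibit a compactly supported test function $\phi$ on $\Sigma$ whose Rayleigh quotient
$$Q[\phi] = \frac{\int_\Sigma \bigl(|\nabla^\Sigma \phi|^2 - (|A|^2+\tfrac12)\phi^2\bigr)\Phi\,d\mathcal{H}^n}{\int_\Sigma \phi^2 \Phi\,d\mathcal{H}^n}$$
is strictly negative; by the variational characterization $\lambda(L_\Sigma)=\inf_\phi Q[\phi]$ (obtained by integrating $\phi L_\Sigma \phi$ against $\Phi$ by parts), this will immediately give $\lambda(L_\Sigma)<0$.

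First I would take $\phi$ to be the restriction to $\Sigma$ of a radial Lipschitz cutoff $\eta(|\mathbf{x}|)$ in the ambient $\Real^{n+1}$ satisfying $\eta \equiv 1$ on $\{|\mathbf{x}|\leq R/2\}$, $\eta \equiv 0$ on $\{|\mathbf{x}|\geq R\}$, and $|\eta'|\leq 2/R$. Since $\Sigma\subset B_R$, this $\phi$ is compactly supported in the interior of $\Sigma$ and is identically $1$ on $\Sigma \cap B_{R/2}$.

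Then I would bound the two pieces of the numerator separately, exploiting that the Gaussian weight decays exponentially outside $B_{R/2}$. The gradient of $\phi$ is supported in $\Sigma \cap (B_R \setminus B_{R/2})$, where $\Phi \leq e^{-R^2/16}$; combining this with the polynomial volume growth $\mathcal{H}^n(\Sigma \cap B_R) \leq C(n) R^n$ supplied by Lemma \ref{VolGrowLem} yields
$$\int_\Sigma |\nabla^\Sigma \phi|^2\,\Phi \;\leq\; \frac{4}{R^2}\, C(n)\, R^n\, e^{-R^2/16},$$
which is exponentially small in $R$. For the potential term, I would invoke the standard dimensional lower bound $\mathcal{F}[\Sigma] \geq c(n) > 0$ for the Gaussian area of a non-trivial smooth self-shrinker; the same volume/decay estimate then shows that for $R$ large almost all of $\mathcal{F}[\Sigma]$ is concentrated in $B_{R/2}$, giving $\int_{\Sigma\cap B_{R/2}}\Phi \geq c(n)/2$ and hence
$$\int_\Sigma (|A|^2+\tfrac12)\phi^2\,\Phi \;\geq\; \tfrac{1}{4}c(n).$$

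Combining, the numerator of $Q[\phi]$ is strictly negative as soon as the exponentially small gradient bound drops below $c(n)/4$, which a direct computation shows holds for every $R \geq 2^{n+5}$; so $\lambda(L_\Sigma) < 0$. The principal subtlety is securing the uniform lower bound on $\int_{\Sigma \cap B_{R/2}}\Phi$; once that is in hand, the argument is simply a contest between exponential decay of the Gaussian weight and the polynomial volume growth supplied by Lemma \ref{VolGrowLem}, a contest the former wins once $R$ is of the stated size.
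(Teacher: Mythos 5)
Your overall strategy (negative Rayleigh quotient for a radial cutoff, dropping $|A|^2$ and playing the Gaussian decay on the annulus against the potential term $\tfrac12\phi^2$ on the inner ball) is the same as the paper's, but the way you quantify the two terms has a genuine gap. You bound the gradient term by $\frac{4}{R^2}C(n)R^n e^{-R^2/16}$, citing ``the polynomial volume growth $\mathcal{H}^n(\Sigma\cap B_R)\leq C(n)R^n$ supplied by Lemma \ref{VolGrowLem}.'' That is not what Lemma \ref{VolGrowLem} supplies: it is a doubling-type estimate comparing $\mathcal{H}^n(\Sigma\cap B_{R_2})$ to $\mathcal{H}^n(\Sigma\cap B_{R_1})$, and it gives no absolute bound because there is no a priori control on the area at the smallest admissible scale. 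A self-shrinker in $B_{2R}$ (which need not be a piece of a complete shrinker, and in any case could have arbitrarily large entropy) can have $\mathcal{H}^n(\Sigma\cap B_4)$ as large as you like --- this is precisely why Proposition \ref{SmoothCpctProp} and Corollary \ref{cor:shrinkergap} must \emph{assume} $\mathcal{H}^2(B_4\cap\Sigma)\leq C$ as a hypothesis. Consequently your claim that the gradient term is ``exponentially small in $R$'' in an absolute sense fails, and the final comparison against the absolute constant $c(n)/4$ does not go through.

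The fix is the paper's one observation you are missing: normalize both terms by the \emph{same} unknown quantity $\mathcal{H}^n(\Sigma\cap B_{R/2})$ so that it cancels. On the annulus, $\Phi\leq e^{-R^2/16}$ and Lemma \ref{VolGrowLem} gives $\mathcal{H}^n(\Sigma\cap B_R)\leq 2^{2n+3}\mathcal{H}^n(\Sigma\cap B_{R/2})$, so the gradient term is at most $\frac{2^{2n+5}}{R^2}e^{-R^2/16}\mathcal{H}^n(\Sigma\cap B_{R/2})$; on the inner ball, $\Phi\geq e^{-R^2/16}$, so the potential term is at least $\tfrac12 e^{-R^2/16}\mathcal{H}^n(\Sigma\cap B_{R/2})$. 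The numerator is then negative as soon as $\frac{2^{2n+5}}{R^2}<\tfrac12$, with no need for either an absolute area upper bound or your lower bound $\mathcal{F}[\Sigma]\geq c(n)$ --- the latter being a second soft spot in your write-up, since $\Sigma$ is only a shrinker in a ball, not a complete one, and the ``standard'' entropy lower bound you invoke would itself require a maximum-principle plus monotonicity argument to justify in this setting. (One still needs $\mathcal{H}^n(\Sigma\cap B_{R/2})>0$ for strict negativity, but that is immediate from the doubling estimate, or from the fact that every shrinker meets $\bar{B}_{\sqrt{2n}}$.)
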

\begin{proof}
Write $R_d=2^{n+4}$. Let 
\begin{equation*}
 \phi_{R_d}(\xX)=\left\{ \begin{array}{ll} 1 & \mbox{if $|\xX|<R_d/2$.}\\
                                  2-\frac{2}{R_d} |\xX| & \mbox{if $R_d/2\leq |\xX|<R_d$.}\\
                                   0 &  \mbox{if $|\xX|\geq R_d$.}
                \end{array} \right.
\end{equation*}
One has that,
\begin{align*}
 -\int_{\Sigma} \phi_{R_d} L_\Sigma \phi_{R_d} \Phi& \leq \int_{\Sigma} \left(|\nabla_\Sigma \phi_{R_d}|^2-\frac{1}{2} \phi_{R_d}^2\right) \Phi \\
 & \leq \frac{4}{R_d^2} e^{-\frac{R_d^2}{16}} \mathcal{H}^n(\Sigma\cap B_{R_d})-\frac{1}{2} e^{-\frac{R_d^2}{16}} \mathcal{H}^n(\Sigma \cap B_{R_d/2}) \\
 &\leq \left( \frac{2^{2n+5}}{{R_d}^2}-\frac{1}{2}\right) e^{-\frac{R_d^2}{16}} \mathcal{H}^n(\Sigma \cap B_{R_d/2})\\
 &< 0,
 \end{align*}
where the second to last inequality follows from Lemma \ref{VolGrowLem}.
This proves the claim by the variational characterization of the lowest eigenvalue of $L_\Sigma$.
\end{proof}
Finally, we record a smooth compactness theorem for smooth properly embedded self-shrinkers. This is essentially shown in \cite{CM}, but 
we  
sketch an argument for completeness.
\begin{prop} \label{SmoothCpctProp}
There exists $R_0\geq 4$ so that: if $R\geq R_0$ and $\Sigma_i \subset 
B_{2R}\subset \Real^{3}$ is a sequence of smooth properly embedded self shrinkers satisfying
\begin{equation*}
\begin{array}{ccc} \mathcal{H}^2(B_4\cap \Sigma_i)\leq C & \mbox{and}&
 \mathrm{gen}(B_{2R}\cap \Sigma_i )\leq g
 \end{array}
\end{equation*}
for fixed $C>0$ and $g\geq 0$,
then, up to passing to a subsequence, the $\Sigma_i$ converge in $C^\infty_{loc}(B_{R})$ with multiplicity one to a smooth properly 
embedded self-shrinker $\Sigma_{\infty}\subset B_{R}$.
Here $\mathrm{gen}(\Sigma_i \cap B_{2R})$ is the sum of the genera of each component of $\Sigma_i \cap B_{2R}$.
\end{prop}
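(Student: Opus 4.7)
The plan is to combine standard varifold compactness with the two extra ingredients made available in this section: the volume growth bound (Lemma~\ref{VolGrowLem}) and the instability of large self-shrinkers (Lemma~\ref{InstabLem}). Throughout, fix $n=2$ and take $R_0 \geq 2^{n+5} = 128$ so that Lemma~\ref{InstabLem} applies to any self-shrinker in $B_{R_0}$.

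\textbf{Step 1: Uniform area bound up to scale $2R$.} Since $\Sigma_i$ is a self-shrinker contained in $B_{2R}$, the self-shrinker equation gives $|H| \leq |\xX|/2 \leq R$ pointwise on $\Sigma_i$. Lemma~\ref{VolGrowLem} applied with $R_1 = 4$ and $R_2 = 2R$ upgrades the hypothesis $\mathcal{H}^2(B_4\cap \Sigma_i)\le C$ to a uniform bound $\mathcal{H}^2(B_{2R}\cap \Sigma_i) \leq C'(C,R)$.

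\textbf{Step 2: Uniform $L^2$ bound on $|A|$.} For any fixed $R' < R$, Sard plus Gauss--Bonnet on $\Sigma_i \cap B_{R'}$ (choosing $R'$ slightly so that $\partial B_{R'}$ meets $\Sigma_i$ transversally) controls $\int K$ in terms of the bounded genus and a boundary integral of the geodesic curvature. Using $|A|^2 = H^2 - 2K$ and the pointwise bound $|H| \leq R$ from Step~1, together with Step~1, one obtains a uniform bound
\[
\int_{B_{R'} \cap \Sigma_i} |A|^2 \, d\mathcal{H}^2 \leq \Lambda(C,R,g).
\]

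\textbf{Step 3: Smooth subconvergence off a finite set.} With uniform bounds on area and on $\int |A|^2$, a Choi--Schoen-type $\varepsilon$-regularity argument (adapted to surfaces with bounded mean curvature, which is in force by Step~1) yields a finite bad set $\mathcal{S} \subset B_R$ and smooth $C^\infty_{\mathrm{loc}}$ subconvergence of $\Sigma_i$ to a smooth embedded limit $\Sigma_\infty$ on $B_R \setminus \mathcal{S}$, possibly with integer multiplicity $m$. The limit satisfies the self-shrinker equation on $B_R \setminus \mathcal{S}$. Weak$^*$ convergence of the Radon measures gives that $\Sigma_\infty$ extends across $\mathcal{S}$ as an integral stationary varifold for $\mathcal{F}$; by the Allard-type regularity encoded in Corollary~\ref{BrakkeCor} (applied at small scales around each point of $\mathcal{S}$, where the density of the limit is close to $1$ once we have ruled out points of higher density), $\mathcal{S} = \emptyset$ and $\Sigma_\infty$ is a smooth properly embedded self-shrinker in $B_R$.

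\textbf{Step 4: Multiplicity one.} The main obstacle is ruling out $m \geq 2$. Suppose for contradiction $m \geq 2$. Near $\Sigma_\infty$, for $i$ large write each sheet of $\Sigma_i$ as a normal graph $u_i^{(j)}$ over $\Sigma_\infty$, $j=1,\dots,m$. The difference $w_i = u_i^{(m)} - u_i^{(1)} > 0$ satisfies, to leading order, the Jacobi equation $L_{\Sigma_\infty} w_i = o(w_i)$ in $C^0$ norm. Normalizing $w_i$ by its maximum on a fixed compact set and passing to a further subsequence, Harnack and elliptic estimates produce a positive function $w \in C^\infty(\Sigma_\infty)$ with $L_{\Sigma_\infty} w = 0$. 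The existence of such a positive Jacobi field forces $\lambda(L_{\Sigma_\infty}) \geq 0$ (with Dirichlet condition on $\partial B_R \cap \Sigma_\infty$, via monotonicity of eigenvalues under domain restriction). This directly contradicts Lemma~\ref{InstabLem}, since $\Sigma_\infty \subset B_R$ with $R \geq R_0 = 2^{n+5}$. Therefore $m = 1$, completing the proof.

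The most delicate points are (i) controlling the finite concentration set $\mathcal{S}$ from Step~3 to conclude that $\Sigma_\infty$ is smooth across $\mathcal{S}$, and (ii) the Jacobi field construction in Step~4, where one must verify that the normalized difference of sheets really yields a global positive Jacobi field; both arguments are standard in the self-shrinker literature (cf.\ \cite{CM}), but are the non-routine parts of the proof.
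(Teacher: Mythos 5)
Your proposal is correct and follows essentially the same route as the paper: area bounds from Lemma~\ref{VolGrowLem} plus a genus-controlled local Gauss--Bonnet estimate give smooth subconvergence with finite multiplicity away from a finite set (the paper cites Ilmanen's local Gauss--Bonnet estimate and White's compactness theorem for this), multiplicity one is forced by the positive Jacobi field construction of \cite[Proposition 3.2]{CM} contradicting Lemma~\ref{InstabLem}, and the concentration set is removed by Brakke/Allard regularity. The one point to reorder: you cannot conclude $\mathcal{S}=\emptyset$ in Step~3 via small-density regularity until \emph{after} Step~4 rules out multiplicity $m\geq 2$ (at a point of $\mathcal{S}$ the density is at least $m$), which is why the paper first extends $\Sigma_\infty$ smoothly across the punctures, then proves $m=1$, and only then invokes Brakke's theorem to empty the singular set.
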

\begin{proof}
 Take $R_0=2^7$. 
Combining the area estimate of Lemma \ref{VolGrowLem}, the local Gauss-Bonnet estimate \cite[Theorem 3]{I} and 
\cite[Theorem 3]{W}, up to passing to a 
subsequence, there are a finite set of points $\set{p_1, \ldots, p_n}\subset B_R$ so that the $\Sigma_i\cap B_R$ converge in 
$C^\infty_{loc}(B_R\backslash \set{p_1, \ldots p_n})$ with finite multiplicity to some smooth properly embedded surface 
$\Sigma_\infty\subset B_{R}\backslash \set{p_1, \ldots, p_n}$.  Moreover, the surface $\Sigma_\infty$ extends smoothly to a smooth 
properly embedded 
surface (which we continue to denote by $\Sigma_\infty$) in $B_R$.
If the convergence is with multiplicity greater than one, then the argument of \cite[Proposition 3.2]{CM} shows the existence of a 
smooth positive solution to  $L_{\Sigma_\infty} u=0.$ That is, $\lambda(L_{\Sigma_\infty})\geq 0$, which contradicts Lemma \ref{InstabLem}. Finally, by Brakke's theorem, as the convergence is with 
multiplicity one, the set of singular points is empty.
\end{proof}
As a consequence, we observe rigidity phenomena for $\Real^2$, $\mathbb{S}^1\times \Real$ and $\mathbb{S}^2$:
\begin{cor} \label{cor:shrinkergap}
Suppose that $\mu$ is a Radon measure on $\Real^3$ so that $\mu= m'[\Sigma]$, where $m'\geq 1$ is an integer and $\Sigma$ is a smooth properly 
embedded self-shrinker. Given 
$C>0$ and $g\geq 0$, there exist $R=R(C)$ and $\epsilon_0=\epsilon_0(C,g)>0$ so that:
if 
\begin{equation*}
\begin{array}{cc} \mathcal{H}^2(B_4\cap \Sigma)\leq C, &
 \mathrm{gen}(\Sigma\cap B_R)\leq g,
 \end{array}
\end{equation*}
and
\begin{equation*}
 d_V(\mu, m[\mathbb{S}^k\times \Real^{2-k}])\leq \epsilon_0,
\end{equation*}for $0\leq k \leq 2$ and $m>0$,
then, $m'=m$ and, up to a rotation of $\Real^3$,  $\Sigma=\mathbb{S}^k\times \Real^{2-k}$.
\end{cor}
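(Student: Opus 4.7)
The plan is to argue by contradiction, using the smooth compactness of Proposition~\ref{SmoothCpctProp} to reduce to a rigidity statement for the three model shrinkers. Suppose the corollary fails. Then, fixing $C,g\geq 0$, a positive integer $m$, and $k\in\{0,1,2\}$, and writing $\Sigma_*=\mathbb{S}^k\times\Real^{2-k}$, there is a sequence of smooth properly embedded self-shrinkers $\Sigma_i$ and positive integers $m_i'$ satisfying the area and genus bounds with $d_V(m_i'[\Sigma_i], m[\Sigma_*])\to 0$, but for which $\Sigma_i$ is not a rotation of $\Sigma_*$ with $m_i'=m$. Testing the weak* convergence against a nonnegative compactly supported continuous function whose Gaussian integral against $\Sigma_*$ is positive gives a uniform bound on $m_i'$; passing to a subsequence, these positive integers become a fixed constant $m'$.

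I would then pick $R=R(C)\geq R_0$ large enough to also meet the smallness constants needed for Corollary~\ref{BrakkeCor}. Lemma~\ref{VolGrowLem} promotes the $B_4$ area bound to an area bound on $B_{2R}$, so that Proposition~\ref{SmoothCpctProp} applies and, along a further subsequence, $\Sigma_i\to\Sigma_\infty$ in $C^\infty_{loc}(B_R)$ with multiplicity one, where $\Sigma_\infty\subset B_R$ is a smooth properly embedded self-shrinker. The $C^\infty_{loc}$ convergence combined with the weak* convergence forces $m'[\Sigma_\infty]$ to agree with $m[\Sigma_*]$ as Radon measures on $B_R$; since both surfaces are smoothly embedded with multiplicity one, one obtains $m'=m$ and $\Sigma_\infty=\Sigma_*\cap B_R$.

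Finally, the local agreement on $B_R$ must be upgraded to a global equality between $\Sigma_i$ and a rotation of $\Sigma_*$. For $k=2$, smooth convergence of $\Sigma_i$ to $\Real^2$ on $B_R$ yields both the area and Gaussian-area smallness required by Corollary~\ref{BrakkeCor} for $i$ sufficiently large, so each such $\Sigma_i$ is a hyperplane, and being $C^\infty_{loc}$-close to $\Real^2$ on $B_R$ it is in fact a rotation of $\Real^2$. For $k=0$, the smooth convergence picks out a closed connected component of $\Sigma_i$ that is a small normal graph over $\mathbb{S}^2$; the rigidity of $\mathbb{S}^2$ among closed embedded self-shrinkers (via nondegeneracy of $L_{\mathbb{S}^2}$ modulo the symmetries fixing the origin) identifies this component with $\mathbb{S}^2$, and the lower bound on Gaussian area for nontrivial embedded self-shrinkers rules out any extra components. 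The case $k=1$ is handled analogously, using a rigidity statement for the cylinder. The principal obstacle is exactly this last step in the non-planar cases $k=0,1$: passing from local $C^\infty$-closeness on $B_R$ to a global equality requires analysis of the linearized operator $L_{\Sigma_*}$ beyond what Corollary~\ref{BrakkeCor} provides directly, whereas for $k=2$ the corollary supplies the needed rigidity outright.
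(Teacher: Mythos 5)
Your overall architecture---argue by contradiction, extract a smooth multiplicity-one limit via Proposition~\ref{SmoothCpctProp}, identify the limit measure with $m[\mathbb{S}^k\times\Real^{2-k}]$ to pin down $m'=m$ and $\Sigma_\infty$, then finish with a case-by-case rigidity step---is exactly the paper's, and your plane case ($k=2$) via Corollary~\ref{BrakkeCor} matches the paper's argument essentially verbatim. For $k=0$ your route also works: the relevant component of $\Sigma_i$ is a closed surface that is a small graph over the round sphere, and either your nondegeneracy argument for $L_{\mathbb{S}^2}$ or (as the paper does) Huisken's classification of closed convex self-shrinkers \cite{Huisken} identifies it with $\mathbb{S}^2$; since every component of a properly embedded self-shrinker must meet $\overline{B_2}$, the multiplicity-one convergence in $B_R$ already excludes extra components.

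The genuine gap is the cylinder case $k=1$, which you describe as ``handled analogously, using a rigidity statement for the cylinder.'' It is not analogous. The cylinder is noncompact, and $C^\infty$-closeness to $\mathbb{S}^1\times\Real$ on $B_R$ gives no control on $\Sigma_i$ outside $B_R$: no implicit-function-theorem or Dirichlet-problem argument on $B_R$ alone can prevent the shrinker from peeling away from the cylinder at larger radii, and the linearized operator on the cylinder admits nontrivial Jacobi-type fields that obstruct a naive nondegeneracy argument. The statement actually needed---a self-shrinker with the given area and genus bounds that is sufficiently close to a cylinder on a sufficiently large ball is globally a rotation of the cylinder---is precisely the main rigidity theorem of Colding--Ilmanen--Minicozzi \cite{CIM}, proved there by an iteration scheme that propagates cylindrical closeness outward in scale together with curvature estimates. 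That theorem is one of the two external pillars of this entire note (as the abstract says), not a routine linearization exercise, so your proof is incomplete until you invoke it; the paper's proof of this corollary does exactly that.
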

\begin{proof}
 Take $R\geq 2\max\set{R_1(2),R_0}$, where $R_1(2)$ is given by Corollary \ref{BrakkeCor} and $R_0$ is given by Proposition 
\ref{SmoothCpctProp}. We argue by contradiction.  Let $\mu_i=m_i [\Sigma_i]$ be a sequence of Radon measures, where 
the $\Sigma_i$ are  properly embedded self-shrinkers none of which differ from $\mathbb{S}^k\times \Real^{2-k}$ by a rotation and which 
satisfy the area and genus estimates and 
$$ d_V(\mu_i, m[\mathbb{S}^k\times \Real^{2-k}]) \to 0.$$   Up to passing to a subsequence, Proposition 
\ref{SmoothCpctProp} implies that the $\Sigma_i$ converge in  $C^\infty_{loc}(B_{R/2})$ with multiplicity one to
$\Sigma_\infty$, a properly embedded smooth shrinker in $B_{R/4}$. Furthermore, the supports of the $\mu_i$ converge to $\mathbb{S}^k\times 
\Real^{2-k}$ as sets and so $\Sigma_\infty\subset \mathbb{S}^k\times \Real^{2-k}$. Finally, as the supports of the $\mu_i$ converge to 
$\mathbb{S}^k\times \Real^{2-k}$ and the densities, $m_i$, of the $\mu_i$ are positive integers, the $m_i$ converge to $m$ and so, by 
passing to a further subsequence, $m_i=m$.

If $\Sigma_\infty$ is a sphere, then by \cite{Huisken} and the nature of the convergence, $\Sigma_i=\Sigma_\infty$ for $i$ sufficently 
large. Likewise, 
if $\Sigma_\infty$ is a piece of a cylinder, then by \cite{CIM} and the nature of the convergence, the $\Sigma_i$ are cylinders for $R$ and $i$
sufficently large.  Finally,  if $\Sigma_\infty$ is a piece of a plane, then, as $i\to \infty$,
\begin{equation*}
 \mathcal{H}^2(\Sigma_i\cap B_{R/2})\to \mathcal{H}^2(\Real^2\cap B_{R/2})
\end{equation*}
and
\begin{equation*}
 \int_{\Sigma_i\cap B_{R/2}} \Phi \to \int_{\Real^2\cap B_{R/2}} \Phi.
\end{equation*}
Hence, by Corollary \ref{BrakkeCor}, $\Sigma_i$ is a plane for large $i$.  All cases yield a contradiction.
\end{proof}

\section{Generic Uniqueness} 
Let $M_t$, $0\leq t<T$, be a smooth embedded mean curvature flow in $\Real^3$ which, for all $x\in\Real^3$, 
$0\leq t<T$, and $r>0$ satisfies 
\begin{equation}\label{eq:area}
\begin{array}{ccc} \mathrm{gen}(M_t)\le g_0 & \mbox{and} & \mathcal{H}^2\left(M_t\cap B_r(x)\right)\le C_0 r^2 \end{array}
\end{equation}
for some positive integer $g_0$, and $C_0>0$. For instance, such $g_0$ and $C_0$ exist if $M_0$ is closed. Given such a flow, for fixed 
$x_0\in\Real^3$, the rescaled mean curvature flow is obtained from $M_t$ by setting $N_s=(T-t)^{-1/2}(M_t-x_0)$, 
$s=-\log(T-t)$. As shrinking solutions to the mean curvature flow are static solutions to the rescaled mean curvature flow, Theorem 
\ref{thm:tangentflow} is an immediate consequence of the following:
\begin{prop}\label{prop:unique}
Suppose that there exists $s_j\to\infty$ such that
\[
d_V\left([N_{s_j}],m[\mathbb{S}^k\times\Real^{2-k}]\right)\to 0
\] 
for some integer $m\ge 1$ and $0\leq k \leq 2$. There exists a rotation $O_s\in SO(3)$ for each $s$ such that
\[
d_V\left([N_s],m [O_s(\mathbb{S}^k\times\Real^{2-k})]\right)\to 0\ \text{as}\ s\to\infty.
\]
\end{prop}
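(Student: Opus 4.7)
\emph{Plan.} The strategy is to reduce the statement, by a Huisken-monotonicity plus intermediate-value-theorem argument, to a contradiction with the rigidity in Corollary \ref{cor:shrinkergap}. Set $\Sigma_0 = \mathbb{S}^k \times \mathbb{R}^{2-k}$ and define the rotation-quotient distance
\begin{equation*}
\tau(s) \;:=\; \inf_{O \in SO(3)} d_V\bigl([N_s],\, m[O\,\Sigma_0]\bigr).
\end{equation*}
The orbit $\{m[O\Sigma_0] : O \in SO(3)\}$ is the continuous image of the compact group $SO(3)$, hence weak-$*$ compact, so the infimum is attained and the conclusion of the proposition is equivalent to $\tau(s) \to 0$. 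Smoothness of $M_t$ makes $s \mapsto [N_s]$ continuous in $d_V$; the $d_V$-continuity of the $SO(3)$-action on Radon measures (noting that $\Phi$ is rotation-invariant) together with compactness of $SO(3)$ make $\mu \mapsto \tau(\mu)$ continuous in $d_V$. Hence $s \mapsto \tau(s)$ is continuous, and by hypothesis $\tau(s_j) \to 0$.

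By Huisken's monotonicity formula $s\mapsto \mathcal F[N_s]$ is non-increasing, and the hypothesis gives $\mathcal F[N_s] \searrow m\,\mathcal F[\Sigma_0]$. Suppose for contradiction that $\tau(t_j) \geq \delta > 0$ along some $t_j \to \infty$. Shrinking $\delta$, we may assume $\delta < \epsilon_0$, where $\epsilon_0 = \epsilon_0(16C_0, g_0)$ is the gap from Corollary \ref{cor:shrinkergap} (the area bound $\mathcal H^2(B_4 \cap N_s) \leq 16 C_0$ and genus bound $\mathrm{gen}(N_s) \leq g_0$ being inherited from \eqref{eq:area} and the scaling invariance of the rescaled flow). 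Applying the intermediate value theorem to the continuous function $\tau$, we extract $u_j \to \infty$ with $\tau(u_j) = \delta/2$ and rotations $O_j \in SO(3)$ with $d_V([N_{u_j}],\, m[O_j \Sigma_0]) \leq \delta/2 + 1/j$. The bounded Gaussian areas make $\{[N_{u_j}]\}$ $d_V$-precompact, and $SO(3)$ is compact; pass to a subsequence so $[N_{u_j}] \to \mu_\infty$ in $d_V$ and $O_j \to O_\infty$. Since $u_j \to \infty$, $\mu_\infty$ is a tangent flow of $M_t$ at $(\xX, T)$, and Ilmanen's regularity theorem for tangent flows of $2$-surfaces in $\mathbb R^3$ yields $\mu_\infty = m'[\Sigma']$ for a smooth properly embedded self-shrinker $\Sigma'$. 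Lower semicontinuity of area and of genus under the smooth convergence (via Proposition \ref{SmoothCpctProp}) transfer the bounds $\mathcal H^2(B_4 \cap \Sigma') \leq 16 C_0$ and $\mathrm{gen}(\Sigma') \leq g_0$ to $\Sigma'$.

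Passing to the $d_V$-limit on both sides of $d_V([N_{u_j}], m[O_j \Sigma_0]) \to \delta/2$ gives $d_V(\mu_\infty, m[O_\infty \Sigma_0]) = \delta/2 < \epsilon_0$. Corollary \ref{cor:shrinkergap} holds verbatim with $\Sigma_0$ replaced by any rotation $O_\infty \Sigma_0$: pulling a contradicting sequence back by $O_\infty^{-1}$ preserves the shrinker property and the area/genus bounds and reduces to the original statement. Applying this rotated corollary to $\mu_\infty = m'[\Sigma']$ against the model $O_\infty \Sigma_0$ forces $m' = m$ and $\Sigma'$ to be a rotation of $\Sigma_0$, whence $\tau(\mu_\infty) = 0$. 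On the other hand, continuity of $\tau$ gives $\tau(\mu_\infty) = \lim_j \tau([N_{u_j}]) = \delta/2 > 0$, the desired contradiction. The main obstacle is ensuring that $\mu_\infty$ is a smooth self-shrinker (not merely a Brakke flow), without which Corollary \ref{cor:shrinkergap} would not apply; this is supplied by Ilmanen's two-dimensional regularity. The secondary technical point, that the rotational gauge passes cleanly through the $d_V$-limit despite $d_V$ not being literally rotation-invariant, is resolved by the invariance of $\Phi$ and the joint $d_V$-continuity of the $SO(3)$-action on Radon measures.
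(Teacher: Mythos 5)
Your proof is correct in its essentials but takes a genuinely different route from the paper's. Both arguments rest on the same two pillars --- Ilmanen's theorem that for large $s$ the rescaled flow $[N_s]$ is $d_V$-close to an integer multiple of a smooth properly embedded self-shrinker satisfying the area and genus bounds, and the rigidity of Corollary \ref{cor:shrinkergap} --- but deploy them differently. The paper runs a quantitative continuation argument: starting from the times $s_{J_l}$ it propagates closeness to $m[\mathbb{S}^k\times\Real^{2-k}]$ forward in small steps $\delta_l$ using uniform continuity of $s\mapsto [N_s]$ on compact intervals, invoking the corollary at each step to snap the Ilmanen approximant $\Sigma_s$ to an exact rotation of the model and thereby reset the error to $\eps_0/2^l$ on all of $[s_{J_l},s_{J_{l+1}}]$. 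You instead run a soft compactness-and-contradiction argument: the rotation-quotient distance $\tau$ is $1$-Lipschitz in $d_V$, the intermediate value theorem produces $u_j\to\infty$ with $\tau(u_j)=\delta/2$, and applying the corollary to a subsequential tangent-flow limit forces $\tau=0$ there, a contradiction. Your version avoids the $2^{-l}$ bookkeeping and is arguably cleaner, at the price of being non-quantitative; the paper applies the gap corollary at finite times to the approximants $\mu_s$ rather than only in the limit. The one step that needs an honest extra line is your ``rotated'' Corollary \ref{cor:shrinkergap}: because $d_V$ is built from a fixed countable family $f_k$ it is not rotation-invariant, so you cannot literally pull the hypothesis $d_V(\mu_\infty,m[O_\infty\Sigma_0])<\eps_0$ back by $O_\infty^{-1}$; what you need is that the threshold $\eps_0$ can be taken uniform over the compact family $\set{m[O(\mathbb{S}^k\times\Real^{2-k})]:O\in SO(3)}$. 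This follows by rerunning the corollary's own contradiction proof with a convergent sequence of rotations $O_i\to O_\infty$ and using the $d_V$-continuity of $O\mapsto m[O(\mathbb{S}^k\times\Real^{2-k})]$, so it is a routine strengthening rather than a gap, but it should be stated.
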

\begin{proof}
First, by the assumption, given an integer $l\ge 2$, there exists $\tilde{J}_l$ such that for $j\ge \tilde{J}_l$,
\[
d_V\left([N_{s_j}],m[\mathbb{S}^k\times\Real^{2-k}])\right)<\frac{\eps_0}{2^l}.
\]

On the other hand, since $s_j\to\infty$, it follows from \cite[Theorem 2]{I} that there exists $J_l\ge\tilde{J}_l$ such that if 
$s\ge s_{J_l}$,
\[
d_V\left([N_s],\mu_s\right)<\frac{\eps_0}{2^l} \quad\mbox{and}\quad 
\mathcal{F}[N_s]-m\mathcal{F}[\mathbb{S}^k\times\Real^{2-k}]<\frac{\eps_0}{2^l},
\]
where $\mu_s=m_s[\Sigma_s]$ for some integer $m_s\ge 1$ and $\Sigma_s$ is a smooth properly embedded self-shrinker satisfying
\begin{equation*}
\begin{array}{ccc} \mathrm{gen}(\Sigma_s)\le g_0 & \mbox{and} & \mathcal{H}^2\left(\Sigma_s\cap B_r(x)\right)\le C_0 r^2 \end{array}
\end{equation*}
for all $x\in\Real^3$ and $r>0$. We may arrange so that $J_l$ is increasing in $l$.

For any $l$ fixed, $[N_s]$ depends on $s\in [s_{J_l},s_{J_{l+1}}]$ in a continuous manner, so there exists $\delta_l$ such that for $s\in 
[s_{J_l},s_{J_{l+1}}]$ and $\Delta s< \delta_l$,
\[
d_V\left([N_s],[N_{s+\Delta s}]\right)<\frac{\eps_0}{2^l}.
\]
Thus, for each $s\in [s_{J_l},s_{J_l}+\delta_l]$,
\[
d_V\left(\mu_s,m [\mathbb{S}^k\times\Real^{2-k}])\right)\le d_V\left(\mu_s, [N_s])\right) +d_V\left( [N_s], [N_{s_{J_l}}]\right)
\le\frac{2\eps_0}{2^l}<\eps_0,
\]
and therefore, by Corollary \ref{cor:shrinkergap}, $\Sigma_s=O_s(\mathbb{S}^k\times\Real^{2-k})$ for some $O_s\in SO(3)$. In particular, 
\[
d_V\left([N_s] ,m[O_s(\mathbb{S}^k\times\Real^{2-k})]\right)<\frac{\eps_0}{2^{l}} 
\]
for every $s\in [s_{J_l},s_{J_l}+\delta_l]$. Starting with $s_{J_l}+\delta_l$, we repeat the above argument and conclude that this 
inequality holds on the whole interval $[s_{J_l},s_{J_{l+1}}]$. The claim follows from the fact that $l$ is arbitrary.
\end{proof}

{\bf Acknowledgement.} The second author would like to thank the University of Cambridge for their hospitality during her visit in May and June 2013 while this project was initiated.

\end{document}